\theoremstyle:=definition,remark,plain\do{%
        \expandafter\g@addto@macro\csname th@\theoremstyle\endcsname{%
            \addtolength\thm@preskip\parskip
            }%
        }
\newtheorem{theorem}{Theorem}
\newtheorem*{theorem*}{Theorem}
\newtheorem{lemma}[theorem]{Lemma}
\newtheorem{prop}[theorem]{Proposition}
\newtheorem*{claim*}{Claim}
\newtheorem*{quest*}{Question}
\newtheorem{claim}[theorem]{Claim}
\theoremstyle{definition}
\newtheorem*{remark*}{Remark}
\newtheorem{obs}{Observation}
\newcommand{\defeq}{\stackrel{_\text{\tiny{def}}}{=}}
\newcommand{\cal}[1]{\mathcal{#1}}
\begin{document}

\title{No weak $\varepsilon$-nets for lines and convex sets in space}
\author{Otfried~Cheong \and Xavier~Goaoc \and Andreas~F.~Holmsen}

\date{\today}

% \address{Andreas F. Holmsen, %\hfill \hfill \linebreak 
% Department of Mathematical Sciences, % \hfill \hfill \linebreak
% KAIST, 
% Daejeon, South Korea.  \hfill \hfill }
% \email{andreash@kaist.edu}

% \begin{abstract} 
% \end{abstract}

%\maketitle 

\begin{abstract}
  We prove that there exist no weak $\varepsilon$-nets of constant
  size for lines and convex sets in $\mathbb{R}^d$.
\end{abstract}

\maketitle 

\section{Introduction}

One of the most intriguing properties concerning families of convex
sets in $\mathbb{R}^d$ is the existence of weak $\varepsilon$-nets of
constant size. This was discovered by Alon et al.~\cite{weak-nets} in
the early 1990's.

\begin{theorem*}[Weak $\varepsilon$-net theorem]
For every $\varepsilon>0$ and every integer $d>0$ there exists an
integer $k = k(\varepsilon, d)$ with the following property. For any
finite set $X\subset \mathbb{R}^d$ there exists a set $T$ of $k$
points (not necessarily in $X$) that intersects any convex set that
contains at least $\varepsilon|X|$ points of $X$.
\end{theorem*}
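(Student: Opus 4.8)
The plan is to follow the original argument of Alon, B\'ar\'any, F\"uredi and Kleitman, whose engine is a \emph{first selection lemma}: there is a constant $c_d > 0$, depending only on $d$, such that for every $n$-point set $X \subset \mathbb{R}^d$ in general position, some point $p \in \mathbb{R}^d$ lies in at least $c_d\binom{n}{d+1}$ of the $\binom{n}{d+1}$ closed simplices spanned by the $(d+1)$-subsets of $X$. First I would reduce to the case that $X$ is in general position by a standard perturbation, since degenerate simplices can only help. The reason this is the right tool is a convexity observation: if a convex set $K$ contains all $d+1$ vertices of such a simplex $\sigma$, then $\sigma \subseteq K$, so any point lying in $\sigma$ also lies in $K$. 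Hence a single well-chosen point can be charged simultaneously to a constant fraction of all the simplices.

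For the selection lemma itself I would invoke Tverberg's theorem: every set of $(d+1)(r-1)+1$ points can be partitioned into $r$ parts whose convex hulls share a common point. Applying this to the $\binom{n}{(d+1)(r-1)+1}$ subsets of that size (with $r$ a constant) and then double counting, one finds a single point of $\mathbb{R}^d$ that is a Tverberg point for many of these subsets, and each such Tverberg partition witnesses that point inside many of the spanned simplices; a suitable choice of $r$ then yields $c_d$. (The centerpoint theorem corresponds to taking $r$ as large as the point count allows, and already handles $\varepsilon$ close to $1$ with a single net point; this is a useful sanity check.)

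With the selection lemma in hand I would build the net iteratively. Call a $(d+1)$-subset of $X$ \emph{unstabbed} if the current net contains no point of the simplex it spans. A convex set $K$ with $|K \cap X| \ge \varepsilon n$ spans at least $\binom{\varepsilon n}{d+1} \approx \varepsilon^{d+1}\binom{n}{d+1}$ simplices, so by the convexity observation it suffices to drive the number of unstabbed simplices below this threshold: then $K$ must contain a stabbed simplex, hence a net point, and so $T$ meets $K$. Each iteration applies the selection lemma to the points still carrying many unstabbed simplices, adds the resulting deep point to the net, and, via a deletion step that discards only a bounded fraction of the remaining points, forces the count of unstabbed simplices to drop by a constant factor. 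Since the count starts at $\binom{n}{d+1}$ and the target is the fixed fraction $\varepsilon^{d+1}$ of it, only $O\!\left(d\log(1/\varepsilon)\right)$ iterations are needed, so the net has size bounded purely in terms of $\varepsilon$ and $d$.

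The step I expect to be the main obstacle is exactly this bookkeeping in the iteration: arranging the deletions so that (i) each deep point is genuinely forced into every heavy convex set that still owns many of the destroyed simplices, and (ii) the number of unstabbed simplices really contracts geometrically rather than stalling. The subtlety is that the selection lemma speaks about the simplices of an honest point set, whereas after several rounds the surviving simplices need not be precisely those spanned by a subset of $X$; reconciling these, so that the recursion stays applied to a genuine point set while still accounting for all heavy convex sets, is the delicate part, and it is where the constant $c_d$ coming from Tverberg must be spent carefully to keep the final bound independent of $n$.
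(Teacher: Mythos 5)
You should first note a mismatch with the framing of your task: the paper does not prove this statement at all. The weak $\varepsilon$-net theorem is quoted as background, attributed to Alon, B\'ar\'any, F\"uredi and Kleitman \cite{weak-nets}; the paper's own content is a counterexample to an extension of it (for lines and convex sets). So the only comparison possible is with the classical argument you are reconstructing, and your overall architecture is indeed the classical one: a first selection lemma, the observation that a simplex spanned by points of $X\cap K$ lies inside the convex set $K$, and an iteration that adds ``deep'' points to the net. Two steps, however, contain genuine gaps.

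First, your derivation of the selection lemma does not work as stated. Double counting Tverberg partitions over all subsets of size $(d+1)(r-1)+1$ with $r$ a constant cannot produce a \emph{single} point lying in $c_d\binom{n}{d+1}$ simplices: each subset $S$ has its own Tverberg point $p_S$, and since the incidence relation ``$p$ lies in simplex $\sigma$'' ranges over a continuum of candidate points $p$, no pigeonhole over subsets forces the same point to recur. The standard proof instead applies Tverberg \emph{once}, to all of $X$, with the maximal number of parts $r=\lceil n/(d+1)\rceil$, obtaining one point $p$ common to the hulls of all $r$ parts; the missing ingredient is then the \emph{colorful Carath\'eodory theorem}, which for each of the $\binom{r}{d+1}$ choices of $d+1$ parts yields a rainbow simplex (one vertex per part) containing $p$. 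These rainbow simplices are distinct, giving $\binom{r}{d+1}\ge c_d\binom{n}{d+1}$ simplices through the single point $p$. Second, your iteration is both more complicated than necessary and, as you yourself flag, broken at the bookkeeping step: after deletions the surviving unstabbed simplices are not the simplices of an honest point set, so the selection lemma cannot be reapplied to force a constant-factor drop, and discarded points may carry exactly the simplices a heavy convex set was relying on. The standard argument avoids global contraction entirely. Greedily: while some convex $K$ with $|K\cap X|\ge\varepsilon n$ misses the current net $T$, apply the selection lemma to the honest point set $X\cap K$, obtaining a point in at least $c_d\binom{\varepsilon n}{d+1}$ simplices spanned by $X\cap K$; every such simplex lies in $K$, hence contained no point of $T$ before this step, so each iteration permanently stabs at least $c_d\binom{\varepsilon n}{d+1}$ previously unstabbed simplices out of the fixed total $\binom{n}{d+1}$. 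The loop therefore terminates after at most $c_d^{-1}\binom{n}{d+1}/\binom{\varepsilon n}{d+1}=O_d\bigl(\varepsilon^{-(d+1)}\bigr)$ additions (the degenerate case $\varepsilon n<d+1$ is handled by taking $T=X$). This yields $k=k(\varepsilon,d)$ with no deletion step, no geometric decay, and no $O(d\log(1/\varepsilon))$-round scheme.
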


The weak $\varepsilon$-net theorem plays a fundamental role in modern
combinatorial geometry, for instance in the proof of the celebrated
$(p,q)$-theorem \cite{pq-alon}, and determining the correct
growth-rate of the function $k(\varepsilon, d)$ is recognized as one
of the most important open problems in the area \cite{akmm}. The best
known lower bound of
$\Omega\big(\varepsilon^{-1}\log^{d-1}\varepsilon^{-1}\big)$ is due
to Bukh et al.~\cite{bukh}, while the upper bound
$O(\varepsilon^{-(d-1/2-\delta)})$---where $\delta$ is an arbitrarily
small constant---is a recent breakthrough due to
Rubin~\cite{rubin2021stronger}.

Another direction of active research has been to try to extend the
weak $\varepsilon$-net theorem to other types of geometric or
combinatorial set systems. For instance Alon and
Kalai~\cite{hyperplanes} established a weak $\varepsilon$-net theorem
for hyperplane transversals and convex sets in $\mathbb{R}^d$, while
Holmsen and Lee~\cite{absconv} established a weak $\varepsilon$-net
theorem in abstract convexity spaces (extending an earlier result of
Moran and \mbox{Yehudayoff}~\cite{moran2020weak}).

\bigskip

Recently, Imre B{\'a}r{\'a}ny (personal communication, see also
\cite[Conjecture 7.8]{BK}) conjectured a generalization of the weak
$\varepsilon$-net theorem for lines and convex sets
in~$\mathbb{R}^3$. The purpose of this note is to give a
counter-example to his conjecture.

\begin{theorem}\label{t:noepsnet}
For every $d \ge 3$, for every $0 < \varepsilon < 1$, and for every
integer~$k$, there exists a finite family $F$ of convex sets and a
finite set of lines~$L$ in general position in~$\mathbb{R}^d$ with the
following properties:
\begin{enumerate}
    \item\label{large intersection} Every member $K\in F$ intersects
      at least $\varepsilon |L|$ of the lines in $L$. 
    \item\label{large piercing} Any set of $k$ lines in $\mathbb{R}^d$
      misses at least one member $K\in F$. 
\end{enumerate}
\end{theorem}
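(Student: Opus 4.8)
The plan is to build the example inside a fixed $3$-flat and then lift it to $\mathbb{R}^d$. Suppose $F$ and $L$ live in a $3$-flat $H\subset\mathbb{R}^d$ and satisfy the two properties as statements about lines of $H\cong\mathbb{R}^3$. A generic rotation turns $L$ into lines in general position in $\mathbb{R}^d$ without changing which sets they meet, so property~(\ref{large intersection}) is untouched. For property~(\ref{large piercing}), take any $k$ lines of $\mathbb{R}^d$. Each either lies in $H$---where it is an ordinary line of the $\mathbb{R}^3$ construction---or meets $H$ in a single point $x$, in which case it stabs a set $K\subseteq H$ only when $x\in K$, i.e. it is dominated by any line of $H$ through $x$. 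Replacing every off-flat line by such a line of $H$ only increases the collection of sets pierced, so if some $k$ lines of $\mathbb{R}^d$ pierced all of $F$, then some $k$ lines of $\mathbb{R}^3$ would too. Hence it suffices to produce the configuration in $\mathbb{R}^3$.

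The two requirements pull against each other, and recognizing this dictates the construction. A double count gives $\sum_{\ell\in L}\#\{K\in F:\ell\cap K\neq\emptyset\}\ge \varepsilon|L|\,|F|$, so the \emph{average} line of $L$ is a common transversal of at least $\varepsilon|F|$ members of $F$. Consequently the naive route to a large piercing number---placing the sets so that every line meets only a few of them---is unavailable; in fact any two convex sets already share a transversal line, so no packing-type lower bound can work. The only way property~(\ref{large piercing}) can hold for all $k$ with $\varepsilon$ fixed is for the range space on ground set $F$ with ranges $\{K\in F:\ell\cap K\neq\emptyset\}$ (over all lines $\ell$) to have unbounded VC dimension: otherwise the $\varepsilon$-net theorem would hand us a transversal of size $O_\varepsilon(1)$. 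This is the guiding principle---the convex sets must be taken of unboundedly growing combinatorial complexity.

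Concretely I would place $N$ points $v_1,\dots,v_N$ on the moment curve $t\mapsto(t,t^2,t^3)$, with $N$ enormous compared with $k$ and $1/\varepsilon$, and set $F=\{\,\mathrm{conv}\{v_i:i\in S\}:S\in\mathcal S\,\}$ for a carefully chosen family $\mathcal S$ of subsets of $[N]$; the lines $L$ would be a large generic set, for instance the $\binom{N}{2}$ chords $v_iv_j$ perturbed into general position. Whether a line meets $\mathrm{conv}\{v_i:i\in S\}$ is governed, through Gale-type linking relations along the ordered points of the curve, by how $S$ interleaves the few arcs into which the line separates the curve; the structural point is that a single line induces only polynomially many such interleaving patterns. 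Property~(\ref{large intersection}) is then arranged by taking every $S\in\mathcal S$ suitably spread, so that its polytope is large and is crossed by a constant fraction of $L$, which is a direct counting estimate. Property~(\ref{large piercing}) becomes the combinatorial assertion that $k$ lines, contributing together only polynomially many patterns, cannot stab every member of a sufficiently rich $\mathcal S$; choosing $\mathcal S$ to be a pseudorandom (or code-like) family of $\Theta(\varepsilon N)$-subsets, whose covering number under these structured ranges exceeds $k$ once $N$ is large, would close the argument.

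The hard part will be property~(\ref{large piercing}) for \emph{arbitrary} lines rather than chords: one must analyse a general line of $\mathbb{R}^3$ uniformly over all four of its degrees of freedom, bound the complexity of the range $\{S\in\mathcal S:\ell\cap\mathrm{conv}\{v_i:i\in S\}\neq\emptyset\}$, and then prove that $k$ such ranges cannot cover $\mathcal S$---all while keeping the incidences with $L$ dense enough for property~(\ref{large intersection}). It is exactly because lines in space have four degrees of freedom (so that, unlike in the plane where Szemer\'edi--Trotter caps the number of rich lines, many lines can be common transversals of many sets) that high density and arbitrarily large piercing number can coexist. Establishing this uniform complexity bound and the resulting covering lower bound is, I expect, the crux of the proof.
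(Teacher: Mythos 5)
Your reduction from $\mathbb{R}^d$ to $\mathbb{R}^3$ is sound and is essentially the paper's own argument (the paper phrases it via orthogonal projection onto the $3$-flat $S$, which also handles the $k$ lines disjoint from $S$ that your case split omits), and your VC-dimension discussion is a correct heuristic for why the construction must have unbounded complexity. But the heart of the theorem---the three-dimensional configuration---is not proved: you yourself label the decisive step, that $k$ \emph{arbitrary} lines cannot stab every $\mathrm{conv}\{v_i : i \in S\}$ with $S \in \mathcal{S}$, as ``the crux'' and leave it open, and property~(\ref{large intersection}) is likewise only asserted. Nothing in the proposal bounds the covering number of $\mathcal{S}$ under line-induced ranges, and there is a concrete reason to worry: projecting along a candidate line $\ell$, the members $S$ \emph{missed} by $\ell$ are exactly those whose projected points can be separated from the image of $\ell$ by a half-plane, so the ranges are highly structured, and without an explicit analysis one cannot rule out that a bounded number of well-placed lines stabs every suitably spread $S$. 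The paper sidesteps this finite design problem entirely: Lemma~\ref{l:stab} takes $L_n$ to be $n$ lines of one ruling of the hyperbolic paraboloid $z = xy$ and shows that for \emph{any} $B \subseteq L_n$ and \emph{any} finite set $R$ of other lines, a thin polygon in a plane $\Pi_s$ hugging a line $\ell_{\beta^*}$ of the opposite ruling (with $s$ small) meets all of $B$ and misses all of $R$; hence any line family stabbing every compact convex set meeting at least $\varepsilon n$ lines of $L_n$ must meet every $\varepsilon n$-subset of $L_n$ and so has more than $(1-\varepsilon)n > k$ members. The finite family $F$ is then \emph{extracted}, not constructed, by a De Bruijn--Erd\H{o}s compactness argument (Lemma~\ref{l:finiteness})---an idea absent from your plan, which is why your route forces you into the hard uniform complexity bound.

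A secondary but genuine flaw: your final step, ``a generic rotation turns $L$ into lines in general position in $\mathbb{R}^d$ without changing which sets they meet,'' does not work as stated. A single rotation applied to the configuration preserves the degeneracy that all lines lie in a common $3$-flat, so for $d \geq 4$ it cannot produce general position; and perturbing lines individually can destroy property~(\ref{large intersection}) at tangential incidences, while perturbing them might also create a $k$-tuple piercing all of $F$, threatening property~(\ref{large piercing}). The paper resolves this with a quantitative compactness argument: since the space $X$ of $k$-tuples of lines meeting a ball around $\cup F$ is compact and the function assigning to each tuple the largest distance to a missed member of $F$ is continuous and positive, it has a minimum $\delta > 0$; inflating each $K \in F$ by $\delta' < \delta/2$ preserves property~(\ref{large piercing}) and makes all incidences with $L$ robust, after which the lines can be perturbed into general position. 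Some such uniform estimate is unavoidable, and your proposal contains no substitute for it.
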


\section{A construction in special position}

The first ingredient of our proof of Theorem~\ref{t:noepsnet} is the
following configuration of lines in special position in
$\mathbb{R}^3$:

\begin{lemma}\label{l:stab}
  For every integer~$n$ there is a family~$L_n$ of~$n$ lines in $\mathbb{R}^3$
  with the following property. For any subfamily~$B \subseteq L_n$ and
  any finite family of lines~$R$ with $B \cap R = \emptyset$ there
  exists a compact convex set that intersects every line of~$B$, but
  is disjoint from every line of~$R$.
\end{lemma}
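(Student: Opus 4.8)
The plan is to realize the required convex set as the convex hull of a single well-chosen point on each line of~$B$, and to design~$L_n$ so that these points can always be slid to dodge the finitely many lines of~$R$. Concretely, I would take $L_n$ to consist of $n$ lines lying on one ruling of a hyperboloid of one sheet $Q$ (say $x^2+y^2-z^2=1$), chosen in general position along that ruling. Any two lines of the same ruling are skew, and---this is the feature I want to exploit---the opposite ruling of~$Q$ supplies a one-parameter family $\{m_s\}$ of common transversals, each meeting every line of the first ruling in exactly one point. Thus for any subfamily $B=\{\ell_i\}_{i\in I}$ and any transversal $m_s$, the points $\ell_i\cap m_s$ are collinear, so their convex hull is a segment $J_s\subseteq m_s$ that already meets every line of~$B$; the game is then to choose the placement so that $J_s$ (or a slight deformation of it) avoids~$R$.

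First I would dispose of the ``generic'' red lines, of which there are three kinds. A line $r\in R$ that does not lie on~$Q$ meets the quadric in at most two points, hence meets at most two of the transversals~$m_s$. A red line in the opposite ruling coincides with a single $m_s$ and is skew to all the others. And a red line on the \emph{first} ruling whose ruling parameter lies outside the range of~$B$ meets each $m_s$ outside the segment $J_s$, since the cyclic order of the ruling points is the same on every transversal. Because $R$ is finite, all these red lines together forbid only finitely many parameters~$s$; for any surviving~$s$ the segment $J_s$ is a compact convex set meeting~$B$ and disjoint from every red line of these three kinds.

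The remaining---and genuinely hard---case is a red line $r$ on the \emph{same} ruling as~$B$ whose parameter is \emph{interleaved} with those of~$B$. Since the order along the ruling is preserved on every transversal, such an~$r$ meets $J_s$ in an interior point for \emph{every}~$s$; no segment on a transversal can avoid it, and indeed any thin convex set hugging a transversal is pierced. Here I would abandon the collinear placement and instead push the chosen points off the transversal and out to infinity along their own lines: sending some $p_i\to\infty$ replaces a flat part of the hull by a thin wedge escaping in the direction of~$\ell_i$, and because that direction differs from the direction of~$r$, a generic such deformation makes the hull skew to~$r$ while still meeting every blue line. I expect the main obstacle to be precisely the \emph{simultaneous} treatment of all interleaved red lines: in the configuration space $\prod_{i\in I}\ell_i$ each red line forbids a certain set of placements, and the crux is to show that these finitely many forbidden sets do not together cover the whole space. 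I would attack this by choosing the points one line at a time, sending them to infinity in a controlled order so that at each step only finitely many red constraints remain active, and using the general position of $L_n$ on~$Q$ to guarantee a valid choice survives; making this limiting argument precise is where I expect the real work of the lemma to lie.
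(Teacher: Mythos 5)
Your setup is essentially the same as the paper's---$L_n$ is taken to be lines of one ruling of a doubly ruled quadric (the paper uses the hyperbolic paraboloid $z=xy$ rather than a hyperboloid of one sheet), and your first two reductions (red lines off the quadric meet at most two transversals; red lines of the opposite ruling coincide with a single transversal) are correct. But the proof stops exactly where you say it does: the case of a red line on the \emph{same} ruling whose parameter interleaves those of $B$ is the entire content of the lemma, and your treatment of it is a plan, not an argument. As you observe, every segment $J_s$ on a transversal is pierced by such a red line, so the collinear placement is dead; the proposed rescue---sending chosen points to infinity along their blue lines, one at a time, and hoping the finitely many forbidden sets in $\prod_i \ell_i$ do not cover the configuration space---is left entirely unsubstantiated, and you say so yourself. (There is also a smaller inaccuracy: on a hyperboloid of one sheet the ruling is \emph{cyclically} ordered, so ``parameter outside the range of $B$'' has no absolute meaning; the linear order along $m_s$ is the cyclic order cut at the ruling line parallel to $m_s$, so a same-ruling red line avoids $J_s$ only when its parameter lies in the same cyclic gap as that cut---which cannot be arranged simultaneously for red lines in different gaps.)

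The idea you are missing is the one that lets the paper kill \emph{all} same-ruling red lines, interleaved or not, in a single stroke: instead of placing the chosen points on a common transversal, place them in a common \emph{plane} $\Pi$ whose section of the quadric is a conic, with all chosen points on one convex branch. The convex hull $C$ is then a planar polygon inscribed in a strictly convex curve, so $C$ meets the quadric \emph{only} at the chosen points; hence every red line lying on the quadric misses $C$ automatically, with no case analysis on interleaving. The remaining red lines $R'$ each contain points of the quadric with at most two distinct ruling parameters, so one can pick a line $\ell_{\beta^*}$ of the opposite ruling avoided by all of $R'$, and take $\Pi=\Pi_s$ from the pencil of planes degenerating (at $s=0$) to the section through $\ell_{\beta^*}$: in the paper's coordinates each chosen point $p_\alpha(s)$ is within distance $s(1+\frac1\alpha)$ of $\ell_{\beta^*}$, so $C(s)$ lies in a thin cylinder around $\ell_{\beta^*}$ and avoids all of $R'$ once $s<\frac{\delta}{1+1/\alpha_1}$, where $\delta$ is the minimum distance from $\ell_{\beta^*}$ to $R'$. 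This two-line quantitative estimate replaces your open-ended limiting argument, and no points at infinity are needed; without something of this kind, your proposal does not prove the lemma.
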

\begin{proof}
  To construct the set of lines~$L_n$, we use the hyperbolic
  paraboloid~$\Sigma$ defined by the equation~$z = xy$.  For $\alpha
  \in \mathbb{R}$, let $\lambda_\alpha$ denote the line $\Sigma \cap
  \{x = \alpha\}$, or, in other words, the line~$(\alpha, t, \alpha
  t)$.  Symmetrically, for $\beta \in \mathbb{R}$, let $\ell_\beta$
  denote the line $\Sigma \cap \{y = \beta\}$, or, in other words, the
  line~$(t, \beta, \beta t)$.  Note that the lines~$\lambda_\alpha$
  and~$\ell_\beta$ form the two families of rulings of the quadratic
  surface~$\Sigma$.

We pick a set~$A$ of~$n$ positive numbers, and take~$L$ to be the set
of lines~$\{\lambda_{\alpha} \colon \alpha \in A\}$.

Now, let~$B \subset L$ and~$R$ a set of lines with~$R \cap B =
\emptyset$. We can write~$B = \{\lambda_{\alpha_1}, \dots,
\lambda_{\alpha_b}\}$ with~$0 < \alpha_1 < \alpha_2 < \dots <
\alpha_b$ and $b=|B|$.  Let~$R_\Sigma \subseteq R$ be the set of lines of the form
$\lambda_\alpha$, if there is any, and let~$R' \defeq R \setminus
R_\Sigma$.

We note that a line in~$R'$ is either of the form~$\ell_\beta$, for
some~$\beta \in \mathbb{R}$, or intersects~$\Sigma$ in at most two
points. In either case, a line in~$R'$ contains points of~$\Sigma$
with at most two distinct~$y$-coordinates. Since~$R'$ is finite, we
can choose some~$\beta^* \in \mathbb{R}$ to be a value such that no line
in~$R'$ contains a point on~$\ell_{\beta^*}$.

For $s\in \mathbb{R}$, let~$\Pi_s$ denote the plane with equation~$z =
\beta^* x + s$. The intersection~$\Sigma \cap \Pi_s$ is the hyperbola
$y = \beta^* + \frac{s}{x}$.  For $\alpha > 0$, let $p_\alpha(s)
\defeq (\alpha, \beta^* + \frac{s}\alpha, \beta^*\alpha + s)$ denote
the point $\Pi_s \cap \lambda_\alpha$.

Now let~$C(s)$ denote the convex hull of $\{p_{\alpha_i}(s) \colon 1
\leq i \leq b\}$. Note that~$C(s)$ is contained in the plane~$\Pi_s$,
and that the points~$p_{\alpha_i}(s)$ are convexly independent since
they all lie on the same branch of the hyperbola~$y = \beta^* +
\frac{s}{x}$. It follows that~$C(s)$ is a convex~$b$-gon that
intersects the hyperbola (and therefore the surface~$\Sigma$) only in
the points~$p_{\alpha_1}(s), p_{\alpha_2}(s), \dots, p_{\alpha_b}(s)$.
Since~$R_\Sigma \cap B = \emptyset$, no line in~$R_\Sigma$
intersects~$C(s)$ for any~$s > 0$.

We observe next that the distance between the point~$p_\alpha(s)$ and
the line~$\ell_{\beta^*}$ is at most the distance between~$p_\alpha(s)
= (\alpha, \beta^* + \frac{s}{\alpha}, \beta^*\alpha + s)$ and the
point~$(\alpha, \beta^*, \alpha\beta^*)$ on~$\ell_{\beta^*}$, and is
therefore at most~$s(1 + \frac{1}{\alpha})$. It follows that the
entire convex set~$C(s)$ is contained in a cylinder centered around
the line~$\ell_{\beta^*}$ of radius~$s\big(1+\frac1{\alpha_1}\big)$.

By construction, no line in~$R'$ intersects~$\ell_{\beta^*}$. If we
set~$\delta > 0$ to the minimum distance between~$\ell_{\beta^*}$ and
the lines of~$R'$ and choose~$s < \frac{\delta}{1 + 1/\alpha_1}$, then
no line of~$R'$ can intersect~$C(s)$, completing the proof.
\end{proof}

\section{A compactness argument}

The second ingredient of our proof of Theorem~\ref{t:noepsnet} is the
following compactness result:

\begin{lemma}\label{l:finiteness}
  Let $\hat{F}$ be a family of compact convex sets in $\mathbb{R}^3$
  such that any set of $k$ lines misses at least one member of
  $\hat{F}$. Then there exists a finite subfamily $F\subset \hat{F}$
  such that any set of $k$ lines misses at least one member of $F$.
\end{lemma}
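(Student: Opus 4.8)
The plan is to run a compactness argument, but since the space of affine lines in~$\mathbb{R}^3$ is not compact I would first pass to a compactification. Viewing $\mathbb{R}^3 \subset \mathbb{RP}^3$, every affine line extends to a projective line, and the space of all projective lines in~$\mathbb{RP}^3$ is the Grassmannian $G(2,4)$, a compact $4$-manifold. The projective lines that are not affine are exactly those contained in the plane at infinity $H_\infty$; since every $K \in \hat{F}$ lies in the affine part, each such line is automatically disjoint from every member of~$\hat{F}$. This observation is what makes the compactification harmless.

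For each $K \in \hat{F}$ I would define $U_K \subseteq G(2,4)^k$ to be the set of $k$-tuples of projective lines $(m_1,\dots,m_k)$ with $m_i \cap K = \emptyset$ for all~$i$. The first step is to check that each $U_K$ is open, which reduces to showing that the set of projective lines meeting the compact set~$K$ is closed in $G(2,4)$: if $m_n \to m$ and each $m_n$ meets $K$ at a point $x_n \in K$, then compactness of~$K$ gives a convergent subsequence $x_n \to x \in K$, and closedness of the incidence flag $\{(x,m) : x \in m\}$ in $\mathbb{RP}^3 \times G(2,4)$ forces $x \in m$. Hence $\{m : m \cap K = \emptyset\}$ is open, and $U_K$, a $k$-fold product of such sets, is open in $G(2,4)^k$.

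Next I would verify that $\{U_K\}_{K \in \hat{F}}$ covers $G(2,4)^k$. Given any $(m_1,\dots,m_k)$, I discard the lines lying in $H_\infty$ (which miss every member of~$\hat{F}$ regardless), keep the affine ones, and pad the collection back up to a set of exactly~$k$ affine lines by adjoining arbitrary further affine lines. Applying the hypothesis to this set of~$k$ affine lines yields a member $K \in \hat{F}$ disjoint from all of them, hence disjoint from every~$m_i$, so $(m_1,\dots,m_k) \in U_K$. Since $G(2,4)^k$ is compact, the open cover admits a finite subcover $U_{K_1},\dots,U_{K_m}$, and I would take $F = \{K_1,\dots,K_m\}$: every $k$-tuple of projective lines, and in particular every set of~$k$ affine lines, lies in some $U_{K_j}$ and therefore misses~$K_j$.

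The main obstacle is precisely the non-compactness of the affine line space, which the projective embedding resolves; the step requiring care is confirming that the two effects of the compactification work in our favour — that the newly added lines at infinity miss every~$K$, and that the covering property over~$G(2,4)^k$ still follows from the purely affine hypothesis through the padding step. Working with ordered $k$-tuples in the product $G(2,4)^k$ rather than with unordered sets keeps the product topology transparent and costs nothing, since the conclusion for sets of~$k$ lines is then immediate.
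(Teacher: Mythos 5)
Your proof is correct, but it follows a genuinely different route from the paper's. The paper argues combinatorially via the De Bruijn--Erd\H{o}s compactness principle: it forms a hypergraph whose vertices are the members of $\hat{F}$ and whose edges are the \emph{minimal} subfamilies admitting no line transversal, observes that a subfamily is independent exactly when it has a common transversal (so the hypothesis says the hypergraph is not $k$-colorable), and then needs only one geometric fact---that the set of lines meeting a compact convex set is compact, so by the finite intersection property every non-transversal family contains a finite non-transversal subfamily, making all edges finite; the principle then transfers non-$k$-colorability to a finite induced subhypergraph, which is the desired $F$. You instead compactify the line space itself, working with the Grassmannian $G(2,4)$ of projective lines in $\mathbb{RP}^3$ and extracting a finite subcover of $G(2,4)^k$ from the open sets $U_K$ of $k$-tuples missing $K$. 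Notably, both arguments pivot on the same geometric fact---closedness of the incidence between lines and a compact convex body in a compactified line space---but the paper uses it to make hypergraph edges finite, while you use it to make each $U_K$ open; your padding step (discarding lines in $H_\infty$, which miss every member of $\hat{F}$, and replenishing with affine lines before invoking the hypothesis) is the one delicate point and you handle it correctly, including tuples with repetitions. As for what each approach buys: the paper's reduction is modular, in that the De Bruijn--Erd\H{o}s principle applies verbatim to other transversal set systems once minimal non-transversal families are shown finite; your open-cover argument is self-contained, avoids the hypergraph formalism entirely, and, since $G(2,4)^k$ is a compact metrizable space, requires essentially no appeal to the Axiom of Choice---achieving directly what the paper's final remark (ii) arranges separately by restricting to a countable family $\hat{F}$.
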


We show this by a standard application of the De Bruijn--Erd{\H o}s
compactness principle (see for instance
\cite[Section~1.5]{GRS}). Specifically, let $H = (V,E)$ be a
hypergraph with vertex set $V$ and edge set $E$. Recall that a proper
$k$-coloring of $H$ is a partition $V = V_1\cup \cdots \cup V_k$ such
that no edge $e\in E$ is contained in any one of the parts $V_i$. (In
other words, a $k$-coloring is a partition of the vertices into $k$
independent sets.) For a subset $W\subset V$ let $H[W]$ denote the
induced hypergraph with vertex set $W$ and edge set $\{e\in E \colon
e\subset W\}$.

\begin{theorem*}[Compactness Principle]\label{compact}
Suppose all the edges of a hypergraph $H=(V,E)$ are finite. If every
finite set $W\subset V$ induces a hypergraph $H[W]$ which is
$k$-colorable, then $H$ is $k$-colorable.
\end{theorem*}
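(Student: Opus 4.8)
The plan is to prove the Compactness Principle by the standard topological route: realize all candidate colorings as points of a compact product space, recast properness as membership in a family of closed sets, and invoke compactness once the finite intersection property is checked.

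First I would set up the space of colorings. Endow $[k] = \{1,\dots,k\}$ with the discrete topology and form the product space $\Omega = [k]^V$, whose points are arbitrary functions $c \colon V \to [k]$. By Tychonoff's theorem $\Omega$ is compact. For each edge $e \in E$, introduce the set $C_e = \{\, c \in \Omega : c \text{ is not constant on } e \,\}$ of colorings that do not make $e$ monochromatic. Because $e$ is finite, membership in $C_e$ depends only on the finitely many coordinates indexed by $e$, so $C_e$ is clopen, in particular closed, in $\Omega$. A point $c$ is a proper $k$-coloring of $H$ precisely when $c \in \bigcap_{e \in E} C_e$, so the theorem reduces to showing that this intersection is nonempty.

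The key step is to verify the finite intersection property for the family $\{C_e\}_{e \in E}$, and this is exactly where the hypothesis enters. Given finitely many edges $e_1,\dots,e_m$, set $W = e_1 \cup \dots \cup e_m$; since each $e_i$ is finite, $W$ is a finite subset of $V$. By assumption the induced hypergraph $H[W]$ is $k$-colorable, and each $e_i$ is an edge of $H[W]$, so any proper coloring of $H[W]$ is nonconstant on every $e_i$. Extending such a coloring arbitrarily on the remaining vertices of $V$ produces a point of $\Omega$ lying in $C_{e_1} \cap \dots \cap C_{e_m}$, which is therefore nonempty. With the finite intersection property in hand, compactness of $\Omega$ forces $\bigcap_{e \in E} C_e \neq \emptyset$, and any point of this intersection is a proper $k$-coloring of $H$.

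I expect the main (and essentially the only) subtlety to be the role of the finiteness of the edges: it is precisely this assumption that makes each $C_e$ closed, since an edge of infinite size would determine a set depending on infinitely many coordinates and the compactness argument would fail. There is no difficult computation involved. The same conclusion can be reached equivalently through Zorn's lemma or through the compactness theorem of propositional logic, but the Tychonoff formulation keeps the finiteness hypothesis and the finite intersection property in clearest view, so that is the version I would write out.
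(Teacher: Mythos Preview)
Your Tychonoff argument is correct and is the standard proof of the De Bruijn--Erd\H{o}s compactness principle. The paper itself does not supply a proof of this statement: it is quoted as a black box with a reference to Graham--Rothschild--Spencer, so there is no in-paper argument to compare against. One small remark worth noting, since the paper raises it in its final comments: your route through Tychonoff on $[k]^V$ invokes the Axiom of Choice when $V$ is uncountable, whereas in the paper's intended application the vertex set can be arranged to be countable, in which case a direct K\H{o}nig's-lemma or diagonal argument suffices without Choice.
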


\begin{proof}[Proof of Lemma~\ref{l:finiteness}]
We construct a hypergraph $H$ whose vertices are the members of
$\hat{F}$ and whose edges are the minimal subfamilies of $\hat{F}$
which do not have a line transversal. In other words, a subfamily
$G\subset \hat{F}$ is an edge of $H$ if and only if
\begin{itemize}
\item There is no line transversal to the sets in $G$.  
\item Every proper subfamily of $G$ has a line transversal.
\end{itemize}

We note that a subfamily $G\subset \hat{F}$ is independent in $H$ if
and only if the members of $G$ have a common line transversal. In particular,
by the hypothesis, $H$ is \emph{not} $k$-colorable.

If $K$ is a compact convex set in $\mathbb{R}^3$, then the set of
lines in $\mathbb{R}^3$ that intersect $K$ is also
compact. Consequently, if an infinite family of compact convex sets
does not have a line transversal, then it contains some finite
subfamily that does not have a line transversal (by the finite
intersection property). Therefore the edges of $H$ are finite sets,
and so the Compactness Principle applies. Thus there exists a
\emph{finite} subfamily $F\subset \hat{F}$ such that the induced
hypergraph $H[F]$ is not $k$-colorable. In other words, any set of $k$
lines misses at least one member of the finite family~$F$.
\end{proof}

\section{Proof of Theorem~\ref{t:noepsnet} in three dimensions}

Given $0< \varepsilon < 1$ and $k$, choose an integer $n >
\frac{k}{1-\varepsilon}$, and let $L_n$ denote the family of lines
from Lemma~\ref{l:stab}. Let $\hat{F}$ be the family of all compact
convex sets in $\mathbb{R}^3$ that intersect at least $\varepsilon n$
of the lines in $L_n$. It follows from Lemma~\ref{l:stab} that if $R$
is a finite family of lines that intersects any member of $\hat{F}$,
then $R$ must contain at least one line from every subfamily of $L_n$
of size at least $\varepsilon n$. Therefore $|R| > (1-\varepsilon)n >
k$, and consequently, any set of $k$ lines misses at least one member of
$\hat F$.

By Lemma~\ref{l:finiteness}, there is a finite subfamily $F\subset
\hat{F}$ such that any set of $k$ lines misses at least one member of
$F$. We now have a finite family $F$ of compact convex sets and a
finite family of lines $L_n$ which satisfy properties~\eqref{large
  intersection} and~\eqref{large piercing} of
Theorem~\ref{t:noepsnet}. The only thing missing is that the lines
in~$L_n$ are not in general position.

Since $F$ is finite and its members are compact, their union, $\cup
F$, is contained in some ball $B\subset \mathbb{R}^3$. Let $X$ denote
the set of $k$-tuples of lines (not necessarily distinct) that
intersect $B$. Note that $X$ is also compact. For each $x\in X$, let
$f(x)$ denote the maximum among the distances between members of $F$
and lines in~$x$. Note that $f:X \to \mathbb{R}$ is continuous and
that $f(x)>0$ for every $x \in X$. Since $X$ is compact, $f$ attains
its minimum $\delta >0$ over $X$. This means that if we replace each
member $K\in F$ by its Minkowski sum with a ball of radius $\delta' <
\delta/2$, then the resulting family $F_{\delta'}$ also satisfies
properties~\eqref{large intersection} and~\eqref{large piercing}
with~$L = L_n$. We also observe that by choosing $\delta'>0$ sufficiently small, we can guarantee that every line of $L_n$ that intersects $K \in F$ will 
intersect the {\em interior} of the inflation of $K$, and any line of
$L_n$ that misses $K \in F$ also misses the
inflation. 
%\complain{Otfried: I don't see how this follows from any of the assumptions. However, we can simply choose $\delta$ to be  smaller than the minimum distance of a line in~$L_n$ and a convex  body in~$F$.}  
  Again using that the family $F$ is finite, it follows
that each of the lines in $L_n$ can be perturbed slightly while
property \eqref{large intersection} remains true. This concludes the
proof of Theorem~\ref{t:noepsnet} for $d=3$.

\section{Proof of Theorem \ref{t:noepsnet} in higher dimension}

Assume that $d \ge 4$ and let us fix some 3-dimensional subspace $S
\subset \mathbb{R}^d$. As above, given $0< \varepsilon < 1$ and $k$,
there exists a finite family $L$ of lines in $S$ and a finite family
$F$ of compact convex sets in $S$ that satisfy property~\eqref{large
  intersection}. Moreover, any $k$ lines in $S$ misses at least one
member of $F$. Observe that if a line $\ell$ in $\mathbb{R}^d$
intersects some $K \in F$, then the orthogonal projection of $\ell$ on
$S$ also intersects $K$. It follows that $L$ and $F$ also satisfy
property~\eqref{large piercing}.

The set $F$ is contained in a closed ball $B \subset \mathbb{R}^d$ and
the set $X$ of $k$-tuples of lines (not necessarily distinct) of
$\mathbb{R}^d$ that intersect $B$ is compact. We can therefore inflate the
elements of $F$ and perturb $L$ into general position in $\mathbb{R}^d$ as above.

\section{A related question} \label{related}

The key step to our proof of Theorem \ref{t:noepsnet} is the
construction given in Lemma \ref{l:stab}. While the lines in that
construction are carefully chosen, it is natural to ask whether there
is a more general underlying property at work. The following question
arises:

\begin{quest*}
  Given a finite set of blue and red lines in $\mathbb{R}^3$, does
  there exist a convex set that intersects all blue lines and avoids
  all red lines?
\end{quest*}

Some general position assumption is needed, since the answer is
negative for any configuration with three parallel lines, one red
between two blue. We also note that this question is closely
related to the convexity structure investigated by Goodman and Pollack
in \cite{goodman}.

Let $b$ denote the number of blue lines and $r$ the number of red
lines, and assume their union is a set of lines in general position in
$\mathbb{R}^3$. It is quite easy to see that for certain values of $b$
and $r$ we get a positive answer for the question above. For instance,
the case when $b$ is arbitrary and $r=1$ is dealt with in
\cite{goodman} (which additionally deals with the more general
question for $k$-flats in $\mathbb{R}^d$). More generally, we get a
positive answer for lines in general position whenever $b\leq 3$ or
$r\leq 2$. (We leave the simple proofs to the reader.)

\medskip

\begin{comment}
\begin{itemize}
    \item {\em For $r=1$ and $b$ arbitrary, the answer is YES.} Take a
      plane through the red line and parallel to no blue line. Choose
      a side of that plane, pick a point in each blue line on that
      side, and take their convex hull.

\medskip

\item {\em For $r=2$, $b$ arbitrary, and no blue line parallel to a
  common plane with the two red lines, the answer is YES.} Take the
  plane through the first red line and parallel to the second red
  line. Now, on the side of the plane that contains no red line, pick
  a point in each blue line, and take their convex hull.

\medskip

\item {\em For $r$ arbitrary, $b=2$, and the two blue lines skew, the
  answer is YES.} The family of segments spanned by the two blue lines
  (a point on each) is 2-dimensional, while any given red line
  intersects at most a 1-dimensional subfamily of segments. Thus a
  finite number of red lines can not stab all the segments spanned by
  the blue lines.
\end{itemize}

\medskip
\end{comment}

On the other hand, there are general examples for which the answer is negative. In particular we have the following:

\begin{prop} \label{claim:redblue}
  There exists a family of 9 blue lines and 13 red lines in
  $\mathbb{R}^3$ such that any convex set that intersects the blue
  lines must intersect at least one of the red lines. Moreover, the
  construction is ``stable" in the sense that the lines can be
  slightly perturbed without affecting the intersection properties.
\end{prop}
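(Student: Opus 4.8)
The plan is to reduce the statement to a finite, combinatorial assertion about points chosen on the blue lines, exhibit an explicit and highly symmetric configuration of the $22$ lines, and then verify the blocking property through a separation/parity argument; stability will then come for free because every inequality involved is strict.

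First I would record the basic reduction. If a convex set $K$ meets all nine blue lines, then for each blue line $b_i$ we may pick a point $p_i \in K \cap b_i$, and by convexity $\operatorname{conv}(p_1,\dots,p_9) \subseteq K$. Hence it suffices to prove the \emph{finite} statement that for every choice of points $p_i \in b_i$ the polytope $\operatorname{conv}(p_1,\dots,p_9)$ meets at least one red line; the converse implication is immediate, since such a polytope is itself a convex transversal of the blue lines. The convenient dual form of this uses separation: a compact convex set $K$ is disjoint from a line $\rho$ if and only if there is a plane parallel to $\rho$ that strictly separates $K$ from $\rho$ (two disjoint closed convex sets, one of them compact, are strictly separable, and a separating plane keeping an entire line on one side must be parallel to that line). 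Thus $K$ avoids all red lines precisely when, for each red line $\rho_j$, all the points $p_i$ lie strictly inside one ``good'' open half-space bounded by a plane parallel to $\rho_j$ and missing $\rho_j$. The proposition is therefore equivalent to the claim that no open convex region can simultaneously meet all nine blue lines and lie, for each red line, in such a good half-space.

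Next I would write down the configuration explicitly. I expect the right construction to be invariant under a cyclic or dihedral symmetry of $\mathbb{R}^3$ fixing a central axis: the nine blue lines splitting into three symmetric triples arranged to ``surround'' the axis, and the thirteen red lines consisting of the central axis itself together with symmetric orbits placed so as to block every direction in which a blue point could be pushed off to infinity. The useful intuition here is that a \emph{single} red line can never be blocked in a stable way—a full blue line always has points on either side of any plane through a fixed target point, so the blue points can always be slid to dodge one red line—which forces the thirteen red lines to cooperate globally, and it is this cooperation that the symmetric placement is designed to engineer. At this stage I would fix coordinates making the symmetry manifest and check, by inspection, that the configuration is nondegenerate and that the counts are indeed $9$ and $13$.

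The main obstacle, and the heart of the proof, is verifying that this configuration blocks \emph{every} convex transversal. Here I would argue by contradiction: to a putative transversal $K$ avoiding all red lines, the separating planes of the first step attach a combinatorial ``side vector'' recording, for each red line, which good half-space contains $K$. The geometry of the configuration severely restricts which side vectors are realizable, and I would try to show—using the symmetry group together with the impossibility of dodging the red lines simultaneously—that no consistent side vector exists, a contradiction. This is precisely the step in which the specific numbers and the exact placement of the lines must be used, and I expect it to reduce to a finite case analysis or a parity/counting identity. Finally, for stability, I note that every condition appearing above is an \emph{open} condition (strict separation, points lying in open half-spaces) and that only finitely many are involved; hence the entire argument persists under any sufficiently small perturbation of the $22$ lines. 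In particular the lines can be perturbed into general position while the blocking property is retained, exactly the open-condition reasoning already used to complete the proof of Theorem~\ref{t:noepsnet}.
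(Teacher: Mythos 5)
Your write-up is a proof plan rather than a proof, and the gap is exactly at the step you yourself identify as ``the heart of the proof'': you never exhibit the configuration, and you never verify the blocking property. The reduction to convex hulls of points $p_i \in b_i$ and the separation dualization are fine but routine; everything after that is conditional (``I expect the right construction to be invariant\dots'', ``I expect it to reduce to a finite case analysis or a parity/counting identity''). No candidate lines are written down, no realizable ``side vectors'' are analyzed, and no contradiction is derived, so the existence claim of the proposition is not established. For comparison, the paper's proof hinges on a concrete local gadget: three blue lines along pairwise disjoint edges of a cube $C=[-1,1]^3$ and four red lines near the main diagonals of $C$, with the key geometric fact (Claim~\ref{claim:diag}) that any triangle with one vertex on each blue line, all vertices outside $2C$, meets a near-diagonal red line \emph{in its interior}; this is proved by projecting along each diagonal and analyzing joint regions of separated ray triples (Claim~\ref{obs:ray}). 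Three such cubes placed far apart, plus a thirteenth red line through the barycenter orthogonal to the triangle of centers, then give the dichotomy: either some cube has all three of its blue points escaping $2C_i$ (local red lines catch the triangle), or $K$ has a point in each $2C_i$ (the central red line catches $K$). Your symmetric ``central axis plus orbits'' picture gestures at this shape but contains none of its quantitative content.

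There is also a flaw in your stability argument as stated. You claim stability follows because ``only finitely many'' open conditions are involved, but the blocking property is universally quantified over \emph{all} convex transversals, equivalently over the noncompact family of all choices of points $p_i$ on the blue lines; it is infinitely many strict conditions, and strictness alone does not survive perturbation without a uniform margin. The paper supplies exactly this uniformity: in Claim~\ref{obs:ray} the origin lies in the \emph{interior} of the joint region with a fixed $\varepsilon$-margin valid simultaneously for every triangle spanned by the rays, and Claim~\ref{claim:diag} converts this into a single $\varepsilon$ that works for all perturbations of the four diagonals at once. Without some such compactness or uniform-margin argument, your final paragraph does not yield the ``stable'' part of the proposition even granting a correct configuration.
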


We only give an informal sketch of the construction, and leave a
detailed verification to the reader.  First we define the following
three {\em blue} lines
\[
\ell_x = (t,1,-1), \;\; \ell_y = (-1,t,1), \;\; \ell_z = (1,-1,t),   \;\;
t\in \mathbb{R}.
\]
Note that these three lines contain disjoint edges of the unit cube $C
= [-1,1]^3$.

Next, we will define four \emph{red} lines. For each pair of opposite
vertices of the cube~$C$ choose a \emph{red} line whose distance to
these two opposite vertices is very small. In other words, we choose
four \emph{red} lines that are very close to the main diagonals of
$C$.  (See \textsc{Figure}~\ref{cube})

Now consider a triangle $T$ with one vertex on each of the lines
$\ell_x, \ell_y, \ell_z$. The crucial observation is:
\begin{quote} \label{quote}
  \emph{If no vertex of~$T$ lies in the cube~$2C = [-2,2]^{3}$, then~$T$
    intersects one of the red lines.}
\end{quote}

\begin{figure}[h]
    \centerline{\includegraphics{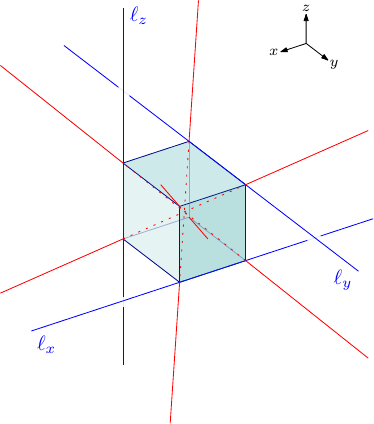}}
    \caption{\small The three \emph{blue} lines $\ell_x, \ell_y,
      \ell_z$ inscribing the cube $C$ and four \emph{red} lines close
      to the main diagonals of $C$.}
    \label{cube}
\end{figure}

The observation above will be proved in Claim \ref{claim:diag} below. The idea of the proof is to analyze the orthogonal projections of our line configuration onto planes perpendicular to the main diagonals of the cube $C$. This reduces the problem to a property of planar geometry, stated in  Claim \ref{obs:ray} below. 

\medskip

Before proving the observation above, let us use it to complete the construction of Proposition \ref{claim:redblue}. Consider three cubes~$C_1, C_2, C_3$ in
$\mathbb{R}^3$, all congruent to $C$, such that the centers of the
$C_i$ are far apart and form an equilateral triangle~$\Delta$. Each
cube~$C_i$ has three \emph{blue} lines containing disjoint edges
of~$C_i$, and four \emph{red} lines as chosen as above. The thirteenth
\emph{red} line passes through the barycenter of the triangle~$\Delta$
and is orthogonal to the plane containing~$\Delta$.  There is enough
freedom to rotate the cubes such that the union of all red and blue
lines is in general position.

We claim that any convex set~$K$ that intersects the 9 \emph{blue}
lines must be intersected by one of the \emph{red} lines. The reason
is the following: $K$ contains a point on each of the~9 blue
lines. If, for some~$C_i$, none of these points lies in~$2C_i$, then
by the observation above~$K$ intersects one of the perturbed main
diagonals of~$C_i$.  Otherwise~$K$ contains a point in~$2C_i$, for all
three~$C_i$.  But then it must intersect the thirteenth red line.

\bigskip

It remains to prove the observation regarding the cube $C$ and the {\em red} lines close to its main diagonals. 

Let $r_1, r_2, r_3$ be three rays in $\mathbb{R}^2$. Assume that no two of the rays are parallel to a common line, and that no line intersects all three rays. Any three rays satisfying these conditions will be called a {\em separated triple} of rays. Our first goal is to describe the set of points (if there are any) that intersect every triangle spanned by $r_1, r_2, r_3$, that is, the triangles with one vertex on each of the rays.

Let $X$ denote the triangle spanned by the initial points of the rays $r_1, r_2, r_3$.
Note that since the rays form a separated triple, each ray $r_i$ is disjoint from a unique side of the triangle $X$ which we denote by $s_i$. 
Define $X_i$ as the set of points that lie on a translate of $r_i$ which starts at some point on $s_i$. 
Thus $X_i$ is a half-strip emanating from the side $s_i$. 
At last, define the {\em joint region} of the rays $r_1, r_2, r_3$ as the intersection $X_1\cap X_2\cap X_3$. Note that it is possible for the joint region to be empty. (See \textsc{Figure} \ref{fig:rays})

\begin{claim}\label{obs:ray}
 Let $r_1, r_2, r_3$ be a separated triple of rays. If a point $x$ is in the joint region of $r_1, r_2, r_3$, then $x$ is contained in every triangle spanned by $r_1, r_2, r_3$. Moreover, if $x$ is in the interior of the joint region of $r_1, r_2, r_3$, then $x$ is also in the interior of the joint region of $r_1', r_2', r_3'$, where $r_i'$ is any sufficiently small perturbation of the ray $r_i$.
\end{claim}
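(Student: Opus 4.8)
The plan is to reduce the containment assertion to a separation statement and then run a short case analysis driven by the no-transversal hypothesis. Write $r_i=\{a_i+t\,d_i: t\ge 0\}$, so that the spanning triangles are exactly the sets $\operatorname{conv}(p_1,p_2,p_3)$ with $p_i=a_i+\tau_i d_i$, $\tau_i\ge 0$. Since each ray already meets the two sides of $X$ incident to its apex $a_i$, the unique side it can avoid is the opposite one, so $s_i=[a_j,a_k]$ and the half-strip is $X_i=\{q+t\,d_i: q\in s_i,\ t\ge 0\}$; in particular every $q\in s_i$ satisfies $\langle u,q\rangle\le \max(\langle u,a_j\rangle,\langle u,a_k\rangle)$ for any direction $u$. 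A point $x$ lies in the compact convex set $\operatorname{conv}(p_1,p_2,p_3)$ if and only if there is no $u$ with $\langle u,p_i\rangle<\langle u,x\rangle$ for all $i$, so it suffices to prove that, for $x$ in the joint region, no such strict separator exists.

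So I would suppose $x\in X_1\cap X_2\cap X_3$ and, for contradiction, that some triangle admits a strict separator $u$. Since the strict separators of $x$ form a nonempty open cone, I may take $u$ generic, so that the three numbers $\langle u,d_i\rangle$ are nonzero and the values $\langle u,a_i\rangle$ are pairwise distinct and distinct from $\langle u,x\rangle$. The key observation is that the no-transversal hypothesis forbids $\langle u,d_1\rangle,\langle u,d_2\rangle,\langle u,d_3\rangle$ from sharing a common sign: if they were all positive (resp.\ negative) then, each ray taking all values $\ge\langle u,a_i\rangle$ (resp.\ $\le\langle u,a_i\rangle$), the level line $\{\langle u,\cdot\rangle=c\}$ with $c=\max_i\langle u,a_i\rangle$ (resp.\ $c=\min_i\langle u,a_i\rangle$) would meet all three rays, giving a transversal. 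Hence $N:=\{i:\langle u,d_i\rangle<0\}$ has exactly one or two elements. For $i\notin N$ the separation gives $\langle u,a_i\rangle\le\langle u,p_i\rangle<\langle u,x\rangle$, while for $i\in N$ the representation $x=q_i+t_i d_i$ with $q_i\in s_i$, $t_i\ge 0$, together with $\langle u,d_i\rangle<0$, yields $\langle u,x\rangle\le\langle u,q_i\rangle\le\max_{m\ne i}\langle u,a_m\rangle$.

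These two families of inequalities are contradictory in both remaining cases. If $|N|=1$, the single $N$-inequality bounds $\langle u,x\rangle$ by the maximum of the two values $\langle u,a_m\rangle$ with $m\notin N$, each of which is strictly below $\langle u,x\rangle$ --- impossible. If $|N|=2$, say $N=\{i_1,i_2\}$ and the remaining index is $i_0$, then each of the two $N$-inequalities, combined with $\langle u,a_{i_0}\rangle<\langle u,x\rangle$, forces $\langle u,a_{i_1}\rangle,\langle u,a_{i_2}\rangle>\langle u,x\rangle$; but $x\in X_{i_0}$ with $\langle u,d_{i_0}\rangle>0$ gives $\langle u,x\rangle\ge\langle u,q_{i_0}\rangle\ge\min(\langle u,a_{i_1}\rangle,\langle u,a_{i_2}\rangle)>\langle u,x\rangle$, again impossible. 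This proves the first assertion. I expect this $|N|=2$ case, where the separator is nearly tangent to the joint region, to be the only delicate point, and passing to a generic separating direction $u$ is precisely what removes the degenerate boundary equality.

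For the stability statement, each half-strip $X_i$ is the intersection of the (at most) three closed half-planes bounded by the line through $s_i$ and by the two lines through $a_j,a_k$ in direction $d_i$, and the affine functionals defining these half-planes depend continuously --- indeed polynomially --- on the endpoints and directions of the rays. The interior of the joint region is cut out by the corresponding strict inequalities, and $x\in\operatorname{int}(X_1\cap X_2\cap X_3)$ is equivalent to $x$ satisfying all of them strictly, since a ball around $x$ inside the intersection lies in each $X_i$. A sufficiently small perturbation of the rays changes these functionals by an arbitrarily small amount and preserves both the nondegeneracy of $X$ and the opposite-side labeling, so every strict inequality persists for $r_1',r_2',r_3'$; hence $x$ remains in the interior of their joint region.
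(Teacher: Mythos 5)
Your main argument takes a genuinely different route from the paper (the paper picks a separating line $\ell$ and argues directly in the picture, splitting on whether the side $s_1$ lies above or below $\ell$; you dualize and do a sign analysis on $\langle u,d_i\rangle$), and most of it is sound --- the exclusion of $N=\emptyset$ and $N=\{1,2,3\}$ via the level-line transversal is correct, and your stability argument is a coordinate version of the paper's distance-to-boundary argument. But there is a genuine gap in the genericity step: you assert that $u$ can be chosen with $\langle u,a_i\rangle\neq\langle u,x\rangle$ for all $i$, which is impossible when $x$ coincides with one of the initial points $a_i$ --- and this case is not excludable, since a vertex of $X$ can lie in the joint region. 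Concretely, take $r_1$ from $a_1=(0,1)$ with direction $(0,1)$, $r_2$ from $a_2=(-1,0)$ with direction $(-1,-1)$, and $r_3$ from $a_3=(1,0)$ with direction $(1,-1)$: this is a separated triple, and $x=a_1$ belongs to $X_1\cap X_2\cap X_3$ (it lies on $s_2$ and $s_3$, and the vertical half-strip $X_1$ contains it). In this situation your $|N|=2$ case collapses: with $x=a_{i_1}$, $N=\{i_1,i_2\}$, the inequality coming from $i_2\in N$ reads $\langle u,x\rangle\le\max(\langle u,a_{i_1}\rangle,\langle u,a_{i_0}\rangle)=\langle u,x\rangle$ and the final step gives $\langle u,x\rangle\ge\min(\langle u,a_{i_1}\rangle,\langle u,a_{i_2}\rangle)=\langle u,x\rangle$, so the chain closes with equalities and produces no contradiction. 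You flagged $|N|=2$ as the delicate point and claimed genericity removes the degeneracy; it does not, because the degeneracy here is in the configuration ($a_{i_1}=x$), not in the choice of $u$.

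The gap is repairable within your framework, which is worth noting: in the degenerate case the scalar data you have derived already force the level line $\{y:\langle u,y\rangle=\langle u,x\rangle\}$ to be a transversal of all three rays, contradicting separatedness. Indeed it meets $r_{i_1}$ at $a_{i_1}=x$; the image of $r_{i_2}$ under $\langle u,\cdot\rangle$ is $(-\infty,\langle u,a_{i_2}\rangle]$ and your $i_1$-inequality together with $\langle u,a_{i_0}\rangle<\langle u,x\rangle$ gives $\langle u,a_{i_2}\rangle\ge\langle u,x\rangle$, so the level line meets $r_{i_2}$; and the image of $r_{i_0}$ is $[\langle u,a_{i_0}\rangle,\infty)$ with $\langle u,a_{i_0}\rangle<\langle u,x\rangle$, so it meets $r_{i_0}$ as well. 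With this patch (or by treating the equality cases uniformly this way instead of invoking genericity of $\langle u,a_i\rangle$ versus $\langle u,x\rangle$), your dual argument becomes a complete and arguably more mechanical alternative to the paper's geometric case analysis; as written, however, the vertex case is a real hole. One smaller unproven assertion --- that the $a_i$ are pairwise distinct and affinely independent, needed both for your genericity claims and for the half-plane description of $X_i$ --- does follow quickly from separatedness (a line through two coincident or three collinear initial points is a transversal), but should be said.
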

\begin{proof} Let $X$ be the triangle spanned by the initial points of the rays, and let $Y$ be their joint region. 
We first observe that $Y\subseteq X$. To see this, assume there exists $x\in Y\setminus X$. Then there is a side of $X$, say $s_1$, such that the line containing $s_1$  separates $x$ from $X$. But since $x\in X_1$, this implies that $r_1$ intersects the line containing $s_1$. The endpoints of $s_1$ are the initial points of the rays $r_2$ and $r_3$, 
and so the line containing $s_1$ intersects all three rays, which contradicts the assumption that the rays form a separated triple. 
\begin{figure}
\centerline{\includegraphics{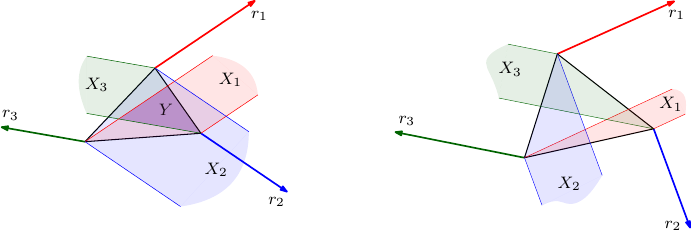}}
\caption{\small Left: A triple of separated rays with non-empty joint region $Y=X_1\cap X_2\cap X_3$. Right: An example with empty joint region.}\label{fig:rays}
\end{figure}

\smallskip

We now prove the first part of %Observation
Claim~\ref{obs:ray}. Let $x$ be an arbitrary point in $Y$ and suppose there is a triangle $T$ spanned by $r_1, r_2, r_3$ which does not contain $x$. There is a line $\ell$, which we may assume is horizontal, such that $T$ lies above $\ell$, while $x$ lies below $\ell$. Next we consider where the triangle $X$ is located relative to the line $\ell$. Up to relabelling of the rays and sides of $X$, and possibly shifting $\ell$ slightly up or down, we may assume that the side $s_1$ of $X$ either lies in the upper open half-plane or in the lower open half-plane bounded by $\ell$.

If $s_1$ lies above $\ell$, then the initial point of the ray $r_1$ must lie below $\ell$, or else $x$ is not contained in $X$. Therefore $r_1$ starts in the lower half-plane and then enters the upper half-plane. But this implies that the half-strip $X_1$ is contained in the upper open half-plane and can not contain the point $x$. 

If $s_1$ lies below $\ell$, then the rays $r_2$ and $r_3$ both start in the lower half-plane and then enter the upper half-plane. This implies that any horizontal line in the upper half-plane intersects both rays $r_2$ and $r_3$. In particular, the vertex of $T$ which is contained in $r_1$ lies on such a horizontal line, and therefore there is a line intersecting all three rays, contradicting the assumption that they form a separated triple. 

\smallskip

Let us now prove the second part of %Observation
Claim~\ref{obs:ray}. Suppose $x$ is in the interior of the joint region of $r_1, r_2, r_3$. There exists an $\varepsilon > 0$ such that the distance from $x$ to the boundary of $X_i$ is at least $\varepsilon$ for every $i$. Thus, if we make the perturbation of each of the rays sufficiently small, the distance from the boundary of $X_i$ to $x$ will decrease by strictly less than $\varepsilon$.
\end{proof}

\medskip

We return to the three-dimensional situation with the three {\em blue} lines $\ell_x, \ell_y, \ell_z$ inscribing the cube $C$ and the four {\em red} lines that are very close to the main diagonals of $C$.
For a point $(x_1,x_2,x_3) \in \mathbb{R}^3$ we associate the triangle
$T(x_1,x_2,x_3)$ as the convex hull of $\{v_1,v_2,v_3\}$ where $v_1 =
(x_1,1,-1) \in \ell_x$, $v_2 = (-1,x_2,1) \in \ell_y$, and $v_3 =
(1,-1,x_3) \in \ell_z$.

Let $\ell$ be a line through the origin in~$\mathbb{R}^3$ with unit
direction vector~$u$. For $\varepsilon>0$, we say that a line~$\ell'$
with unit direction vector~$u'$ is \emph{an $\varepsilon$-perturbation
  of~$\ell$} if the distance between $\ell'$ and the origin is less
than $\varepsilon$ and $u\cdot u' > 1-\varepsilon$.

\begin{claim} \label{claim:diag}
  There exists an $\varepsilon >0$ with the following property: let
  $m_1$, $\dots$, $m_4$ be arbitrary $\varepsilon$-perturbations of
  the lines
  \[
  (t, t, t), \;\; (t, t, -t), \;\; (t, -t, t), \;\; (-t, t, t), \;\;
  t \in \mathbb{R},
  \]
  respectively.  If~$|x_i| \geq 2$ for~$i \in \{ 1, 2, 3\}$, then one of
  the lines~$m_1,\dots, m_4$ intersects the interior
  of~$T(x_1,x_2,x_3)$.
\end{claim}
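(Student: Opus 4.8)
The plan is to reduce the three-dimensional statement to the planar Claim~\ref{obs:ray} by orthogonally projecting along the four main diagonals. Fix a sign pattern; since $|x_i| \ge 2$, once we prescribe $\operatorname{sign}(x_i)$ for each $i$ the vertex $v_i$ ranges over one of the two rays into which removing the segment $|x_i|<2$ splits the blue line carrying $v_i$. Each of the eight patterns points the triangle $T(x_1,x_2,x_3)$ into one of the eight vertex-directions of the cube, and to it I associate the main diagonal facing that direction: $(+,+,+)$ and $(-,-,-)$ both go with $(t,t,t)$, and so on, each diagonal serving two antipodal patterns. The cyclic shift $(x,y,z)\mapsto(z,x,y)$ permutes $\ell_x,\ell_y,\ell_z$ (and shifts the pattern cyclically), while the rotation $(x,y,z)\mapsto(-y,-x,-z)$ preserves the three blue lines and exchanges $(+,+,+)$ with $(-,-,-)$. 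These symmetries fix the diagonal $(t,t,t)$ and reduce the eight patterns to two representatives, say $(+,+,+)$ and $(+,+,-)$, which are handled by the same computation.

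For the exact diagonals, take $u=(1,1,1)/\sqrt3$ and let $\pi_u$ be the orthogonal projection onto $u^{\perp}$. The diagonal projects to the origin and $\pi_u^{-1}(0)$ is exactly that diagonal, so the diagonal meets the interior of $T$ if and only if the origin lies in the interior of $\pi_u(T)$. A direct computation shows that $\pi_u$ sends each half-line $\{v_i\}$ to a ray $r_i$ in $u^{\perp}$, that these three rays have pairwise non-parallel directions and no common transversal (hence form a separated triple), and that $\pi_u(T)$ is precisely a triangle spanned by $r_1,r_2,r_3$. I would then check that the origin lies in the interior of the joint region $X_1\cap X_2\cap X_3$; in both representative cases the origin turns out to be the centroid of the triangle $X$ of initial points, and a short computation places it strictly inside each half-strip $X_i$. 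Since the first part of Claim~\ref{obs:ray} gives $X_1\cap X_2\cap X_3\subseteq\pi_u(T)$, every interior point of the joint region is an interior point of $\pi_u(T)$; as the origin is interior to the joint region, we conclude $0\in\operatorname{int}\pi_u(T)$, i.e. the diagonal meets the interior of $T$.

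To pass to an $\varepsilon$-perturbation $m$ of the diagonal, with unit direction $u'$, I would instead project along $u'$. Then $m$ collapses to a single point $p'$, and since $m$ passes within $\varepsilon$ of the origin we have $|p'|<\varepsilon$; the images of the three blue half-lines become rays $r_1',r_2',r_3'$ which, because $u\cdot u'>1-\varepsilon$, are arbitrarily small perturbations of $r_1,r_2,r_3$ as $\varepsilon\to0$. The origin lies in the interior of the joint region of $r_1,r_2,r_3$ with some fixed positive margin, so by the second part of Claim~\ref{obs:ray}, for $\varepsilon$ small enough the nearby point $p'$ still lies in the interior of the joint region of the perturbed rays, hence (by the inclusion argument above) in the interior of $\pi_{u'}(T)$. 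Thus $m$ meets the interior of $T$. Crucially, the threshold on $\varepsilon$ depends only on the fixed margin and on the modulus of continuity of the projection in its direction, not on the particular triangle $T$, since Claim~\ref{obs:ray} treats all spanned triangles at once; the threshold is therefore uniform over each sign pattern, and taking the minimum over the finitely many patterns yields a single $\varepsilon$ valid simultaneously for $m_1,\dots,m_4$.

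The main obstacle is the explicit planar verification in the exact case: confirming that the three projected rays really are separated and, more delicately, that the origin lies in the \emph{interior} of their joint region rather than merely inside the triangle $X$. Once that computation is in hand for the two representatives and transported across all sign patterns by the symmetries, the reduction through Claim~\ref{obs:ray} and the perturbation argument are routine.
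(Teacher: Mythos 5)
Your proposal is correct and follows essentially the same route as the paper: project along each (exact and perturbed) diagonal, reduce to the planar Claim~\ref{obs:ray} via the separated-triple/joint-region machinery, verify that the origin lies in the interior of the joint region for the exact diagonals, and invoke the second part of Claim~\ref{obs:ray} for uniformity under perturbation. The only differences are presentational—you reduce the eight sign patterns to two representatives by explicit rotational symmetries and propose to verify the joint-region containment by computation (your centroid observation is indeed correct in both representative cases), whereas the paper treats each diagonal's two antipodal ray triples $R_i$, $Q_i$ directly and reads the planar facts off its figures.
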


\begin{proof} We start by noting that among the four main diagonals, only~$(t,t,t)$ is  disjoint from each of the lines $\ell_x, \ell_y, \ell_z$. The remaining three diagonals each intersect exactly two of the lines $\ell_x, \ell_y, \ell_z$, and it turns out that these three cases are symmetric up to reflective symmetry. So below there are only two cases to consider. 

In the arguments that follow, for a unit vector $u\in S^2$, let $\pi_u : \mathbb{R}^3 \to u^\perp$ denote the orthogonal projection onto the orthogonal complement $u^\perp$. 

\smallskip

First we consider the diagonal $(t,t,t)$. Define the following rays in $\mathbb{R}^3$:
\[\begin{array}{ccclccccl}
    R_1 & = & (x_1,1,-1), & x_1 \geq 2 & \hspace{1cm} &     Q_1 & = & (x_1,1,-1), & x_1 \leq 2 \\
    R_2 & = & (-1,x_2,1), & x_2 \geq 2 &&     Q_2 & = & (-1,x_2,1), & x_2 \leq 2 \\
    R_3 & = & (1,-1,x_3), & x_3 \geq 2 &&    Q_3 & = & (1,-1,x_3), & x_3 \leq 2.
\end{array}\]

Let $u = \left(\frac{1}{\sqrt{3}}, \frac{1}{\sqrt{3}}, \frac{1}{\sqrt{3}}\right)$ and define the rays $r_i = \pi_u(R_i)$ and $q_i = \pi_u(Q_i)$. We claim that each of the triples $r_1, r_2, r_3$ and $q_1,q_2,q_3$ are separated and that the origin lies in the interior of each of their joint regions. This can be seen from \textsc{Figure \ref{fig:projection}} (left). 
  \begin{figure}
    \centerline{\includegraphics{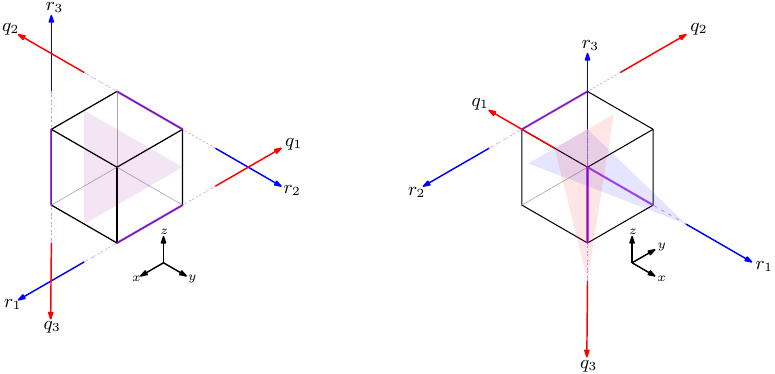}}
    \caption{\small Left: Orthogonal projection along the main
      diagonal~$(t, t, t)$. The shaded triangle is the joint region of $r_1, r_2, r_3$ and of $q_1, q_2, q_3$. Right: Orthogonal projection along the main
      diagonal~$(t, -t, t)$. The blue shaded area is the joint region of $r_1, r_2, r_3$ and the red shaded region is the joint region of $q_1, q_2, q_3$.}
    \label{fig:projection}
  \end{figure}

By the first part of Claim \ref{obs:ray} it follows that any triangle spanned by the rays $r_1, r_2, r_3$ or by the rays $q_1, q_2, q_3$ will contain every point in a small neighborhood of the origin. 
Therefore a line in the direction of $u$ which is sufficiently close to the origin will intersect any triangle spanned by the rays $R_1, R_2, R_3$ or by the rays $Q_1, Q_2, Q_3$. Now consider a unit vector $u'$ which is very close to $u$ and define the rays $r'_i = \pi_{u'}(R_i)$ and $q_i' = \pi_{u'}(Q_i)$. As long as $u'$ is sufficiently close to $u$, the rays $r_i'$ and $q_i'$ will be sufficiently small perturbations of the rays $r_i$ and $q_i$, respectively. So by the second part of Claim \ref{obs:ray} the origin is in the interior of the joint regions of the $r_i'$ and of the $q_i'$. Therefore any line in the direction $u'$ which is sufficiently close to the origin will intersect every triangle spanned by the rays $R_1, R_2, R_3$ or by the rays $Q_1, Q_2, Q_3$.

\smallskip

Now we consider the diagonal $(t,-t,t)$. Define the following rays in $\mathbb{R}^3$:
\[\begin{array}{ccclccccl}
    R_1 & = & (x_1,1,-1), & x_1 \geq 2 & \hspace{1cm} &     Q_1 & = & (x_1,1,-1), & x_1 \leq 2 \\
    R_2 & = & (-1,x_2,1), & x_2 \leq 2 &&     Q_2 & = & (-1,x_2,1), & x_2 \geq 2 \\
    R_3 & = & (1,-1,x_3), & x_3 \geq 2 &&    Q_3 & = & (1,-1,x_3), & x_3 \leq 2.
\end{array}\]

Let $u = \left(\frac{1}{\sqrt{3}}, -\frac{1}{\sqrt{3}}, \frac{1}{\sqrt{3}}\right)$ and define the rays $r_i = \pi_u(R_i)$ and $q_i = \pi_u(Q_i)$. From \textsc{Figure} \ref{fig:projection} (right) we see that each of the triples $r_1, r_2, r_3$ and $q_1, q_2, q_3$ are separated and the origin lies in the interior of each of their joint regions. The rest of the argument is the same as above.
\begin{comment}
Define $R(u)$ to be the radius of the largest disk centered at the origin which is contained in $\bigcap_{T\in {\cal T}}\pi_u(T)$. In the case when the intersection $\bigcap_{T\in {\cal T}}$ is empty or does not contain the origin in its interior, we define $R(u)=0$. Note that this makes $R:S^2 \to \mathbb{R}$ a continuous function.
Furthermore, if $R(u) > 0$, then any line in the direction $u$ whose distance to the origin is at most $R(u)$ will intersect every triangle in $\cal T$. Thus, if we show that $R(u) > 0$ for $u = \left(\frac{1}{\sqrt{3}}, \frac{1}{\sqrt{3}}, \frac{1}{\sqrt{3}}\right)$, then it follows from the continuity of $R$ that any sufficiently small perturbation of the line $(t,t,t)$ will intersect every triangle in $\cal T$. The fact that $R(u)$ is positive is easily seen from \textsc{Figure}~\ref{fig:projection} (left).

\smallskip

Next we consider the diagonal $(t,-t,t)$, and let ${\cal T}'$ denote the family of all triangles $T(x_1,x_2,x_3)$ where $x_1 > 2, x_2 < -2, x_3 > 2$ or $x_1 < -2, x_2 > 2, x_3 < -2$. Similar to above, define $R'(u)$ to be the radius of the largest disk centered at the origin contained inside $\bigcap_{T\in {\cal T}'}\pi_u(T)$. We see from \textsc{Figure}~\ref{fig:projection} (right) that $R'(u)>0$ for $u = \left(\frac{1}{\sqrt{3}}, -\frac{1}{\sqrt{3}}, \frac{1}{\sqrt{3}}\right)$. So by the same reasoning as above it follows that any sufficiently small perturbation of the diagonal $(t,-,t,t)$ will intersect every triangle in~$\cal T'$.
\end{comment}
\end{proof}

\begin{comment}
We are now ready to complete the construction of Proposition \ref{claim:redblue}. Consider three cubes~$C_1, C_2, C_3$ in
$\mathbb{R}^3$, all congruent to $C$, such that the centers of the
$C_i$ are far apart and form an equilateral triangle~$\Delta$. Each
cube~$C_i$ has three \emph{blue} lines containing disjoint edges
of~$C_i$, and four \emph{red} lines as chosen as above. The thirteenth
\emph{red} line passes through the barycenter of the triangle~$\Delta$
and is orthogonal to the plane containing~$\Delta$.  There is enough
freedom to rotate the cubes such that the union of all red and blue
lines is in general position.

We claim that any convex set~$K$ that intersects the 9 \emph{blue}
lines must be intersected by one of the \emph{red} lines. The reason
is the following: $K$ contains a point on each of the~9 blue
lines. If, for some~$C_i$, none of these points lies in~$2C_i$, then
by the observation above~$K$ intersects one of the perturbed main
diagonals of~$C_i$.  Otherwise~$K$ contains a point in~$2C_i$, for all
three~$C_i$.  But then it must intersect the thirteenth red line.
\end{comment}
\section{Final remarks}

\begin{enumerate}[(i)]
\item We do not know if an analogue of Theorem~\ref{t:noepsnet} holds
  for $2$-planes in $\mathbb{R}^4$.

\item In the proof of Lemma \ref{l:stab} it is sufficient to consider
  parameters $\beta^*\in \mathbb{N}$ and $s\in \big\{\frac{1}{i}
  \colon i\in \mathbb{N} \big\}$ in the definition of the line
  $\ell_{\beta^*}$ and the planes $\Pi_s$. This in turn implies that
  when we apply the Compactness Principle we may take $\hat{F}$ to be
  a {\em countable} family of compact convex sets, which means that
  the Axiom of Choice does not need to be invoked. (See the first
  proof of Theorem 4 in \cite{GRS}.)
\end{enumerate}

\bibliographystyle{plain} \bibliography{ref}

\end{document}